\newcommand{\R}{\mathbb R}
\newcommand{\N}{\mathbb N}
\newcommand{\C}{\mathbb C}
\newcommand{\A}{\mathcal{A}}
\newcommand{\B}{\mathcal{B}}
\newcommand{\HH}{\mathcal{H}}
\newcommand{\T}{\mathcal{T}}
\newcommand{\al}{\alpha}
\newcommand{\de}{\delta}
\newcommand{\D}{\Delta}
\newcommand{\la}{\lambda}
\newcommand{\io}{\iota}
\newcommand{\s}{\sigma}
\newcommand{\Om}{\Omega}
\newcommand{\om}{\omega}
\newcommand{\e}{\eta}
\newcommand{\ep}{\varepsilon}
\newcommand{\del}{\partial}
\newcommand{\ti}[1]{\tilde{#1}}
\newcommand{\tid}[1]{\overline{#1}}
\newcommand{\1}{\mathds{1}}
\newcommand{\sle}{{\rm SLE}}
\newcommand{\2}{\tfrac{1}{2}}
\newcommand{\hc}[1]{\boxminus #1}
\newcommand{\p}{\mathbf{p}}
\theoremstyle{plain}
 \newtheorem{thm}{Theorem}
 \newtheorem{lem}[thm]{Lemma}
 \newtheorem{corol}[thm]{Corollary}
\theoremstyle{definition}
 \newtheorem*{cond}{Conditions}
\DeclareMathOperator{\Var}{Var}
\title{Singularity of Full Scaling Limits of Planar Nearcritical Percolation}
\author{Simon Aumann\\[0.5ex] \textit{\small{Mathematisches Institut, Ludwig-Maximilians-Universit\"at M\"unchen}}\\[-0.5ex] \textit{\small{Theresienstr.\ 39, D-80333 M\"unchen, Germany}}\\[-0.5ex] \small{aumann@math.lmu.de}}
\begin{document}

\maketitle

\begin{abstract}
 We consider full scaling limits of planar nearcritical percolation in the Quad-Crossing-Topology introduced by Schramm and Smirnov. We show that two nearcritical scaling limits with different parameters are singular with respect to each other. The results hold for percolation models on rather general lattices, including bond percolation on the square lattice and site percolation on the triangular lattice.
\end{abstract}

\begin{small}
\noindent
\textit{AMS Mathematics Subject Classification 2010:} 60K35, 82B43, 60G30, 82B27\\
\textit{Keywords:} nearcritical, percolation, full scaling limit, singular
\end{small}

\section{Introduction}

Percolation theory has attracted more and more attention since Smirnov's proof of the conformal invariance of critical percolation interfaces on the triangular lattice. This was the missing link for the existence of a unique scaling limit of critical exploration paths. In the sequel, not only limits of exploration paths, but also limits of full percolation configurations have been explored. To obtain a scaling limit, one considers percolation on a lattice with mesh size $\e>0$ and lets $\e$ tend to $0$. In the case of the full configuration limit, it is a-priori not clear, in what sense, or in what topology, the limit $\e\to0$ shall be taken. There are several possibilities, nine of them are explained in \cite[p.\ 1770ff]{ss11}. It is highly non-trivial that these different approaches yield equivalent results. Camia and Newman established the full scaling limit of critical percolation on the triangular lattice as an ensemble of oriented loops, see \cite{cn6}. Schramm and Smirnov suggested to look at the set of quads which are crossed by the percolation configuration and constructed a nice topology for that purpose, the so-called Quad-Crossing-Topology, see \cite{ss11}. Since it is closely related to the original physical motivation of percolation and it yields the existence of limit points for free (by compactness), we choose to work with Schramm and Smirnov's set-up.

They considered percolation models on tilings of the plane, rather than on lattices. Each tile is either coloured blue or yellow, independently of each other. All site or bond percolation models can be handled in this way using appropriate tilings. The results of \cite{ss11} hold on a wide range of percolation models. In fact, two basic assumptions on the one-arm event and on the four-arm event are sufficient. The results of the present article also hold on rather general tilings, but a bit stronger assumptions are needed. Basically, we require the assumption of \cite{ss11} on the four-arm event and the Russo-Seymour-Welsh Theory (RSW). The exact conditions are presented below. In particular, we need the arm separation lemmas of \cite{k87} and \cite{n7}. They should hold on any graph which is invariant under reflection in one of the coordinate axes and under rotation around the origin by an angle $\phi\in(0,\pi)$, as stated in \cite[p.\ 112]{k87}. But the proofs are written up only for bond or site percolation on the square lattice in \cite{k87} and for site percolation on the triangular lattice in \cite{n7}. Hence we choose to formulate the exact properties we need as conditions. We will first prove our results under that conditions and we will verify them for bond percolation on the square lattice and site percolation on the triangular lattice afterwards.

We want to consider nearcritical scaling limits. Nearcritical percolation is obtained by colouring a tile blue with a probability slightly different from the critical one. The difference depends on the mesh size, but converges to zero in a well-chosen speed. It includes -- for each tile -- one free real parameter. The main result of the present note is the following: We consider two (inhomogeneous) nearcritical percolations such that the differences of their parameters are uniformly bounded away from zero in a macroscopic region. Then we show that any corresponding sub-sequential scaling limits are singular with respect to each other. 

Nolin and Werner showed in \cite[Proposition 6]{nw9} that -- on the triangular lattice -- any (sub-sequential) scaling limit of nearcritical exploration paths is singular with respect to an $\sle_6$ curve, i.e.\ to the limit of critical exploration paths. This was extended in \cite[Theorem 1]{a12}, where it is shown that the limits of two nearcritical exploration paths with different parameters are singular with respect to each other. The present result is somewhat different to those results, as we will now explain. First, we consider different objects. While in \cite{nw9} and \cite{a12} the singularity of exploration paths was detected, here it is the singularity of the full configurations in the Quad-Crossing-Topology. As long as the equivalence of the different descriptions of the limit object is not proven, these are independent results.  In particular, it is -- even on the triangular lattice -- an open question, whether the exploration path as a curve is a random variable of the set of all crossed quads (cf.\ \cite[Question 2.14]{gps10}). Though the trace of the exploration path can be recovered from the set of all crossed quads, it is not clear how to detect its behaviour at double points. Thus the present result is not an easy corollary to the singularity of the exploration paths. Second, the results of \cite{nw9} and \cite{a12} hold only for site percolation on the triangular lattice, whereas the results of the present article hold under rather general assumptions on the lattice, which are, for instance, also fulfilled by bond percolation on the square lattice. Last, and indeed least, the percolation may also be inhomogeneous here. Since the restriction to homogeneous percolation in \cite{nw9} and \cite{a12} has only technical, but not conceptual reasons, this is only a minor difference. 

The proofs use ideas from \cite{nw9} and \cite{a12}. In fact, the proofs of this article are technically simpler since there is no need to consider domains with fractal boundary. In section \ref{sec:results}, we formally introduce the model and state all theorems and lemmas, which will be proved in section \ref{sec:proofs}.

\section{Results} \label{sec:results}
As already mentioned, we use the set-up of \cite{ss11}. Therefore we consider percolation on tilings of the plane rather than on lattices. A tiling is a collection of polygonal, topologically closed tiles such that the tiles may intersect each other only at their boundary and such that their union is the whole plane. We further require that the tilings are locally finite, i.e.\ any bounded set contains only finitely many tiles,  and trivalent, i.e.\ any point belongs to at most three tiles.

For $\e>0$, let $H_\e$ be a locally finite trivalent tiling such that the diameter of each tile is at most $\e$. A percolation model is obtained by colouring every tile either blue or yellow. Some tiles may have a deterministic colour, while each tile $t\in H_\e'\subseteq H_\e$ is coloured randomly blue with some probability $\p(t)\in[0,1]$ and otherwise yellow, independently of each other. Any site or bond percolation model can be realized using such a tiling, cf.\ \cite[p.\ 1774f]{ss11}. Colouring some tiles deterministically ensures that the tiling is trivalent. For each $\e>0$, we therefore obtain the probability space
$$ \Big(\,\bar{\Om}_\e:=\{\text{blue,yellow}\}^{H_\e'},\quad \bar{\mathcal{A}}_\e\,, \quad \bar{P}^\p_\e := \bigotimes_{t\in H_\e'} \big( \p(t)\delta_\text{blue} + (1-\p(t))\delta_\text{yellow}\big)\Big) $$
with product-$\s$-algebra $\bar{\mathcal{A}}_\e$ and $\p:H_\e'\to[0,1]$. 

But we want to describe all discrete processes as well as the scaling limit by different probability measures on the same space. Thereto we use the space $\mathcal{H}$ of all closed lower sets of quads introduced by Schramm and Smirnov in \cite[Section 1.3]{ss11}. As the exact construction is not important for understanding the present note (but it is important for the properties derived in \cite{ss11} we need), we explain it only very briefly. A quad $Q$ is a homeomorphism $Q:[0,1]^2\to Q([0,1]^2)\subset\C$. A crossing of $Q$ is a connected closed subset of $Q([0,1]^2)$ which intersects the images of the left and the right side of $[0,1]^2$. The question, whether every crossing of a quad contains a crossing of a second quad, provides a partial order on the quads. If a set of quads also contains all smaller quads (in the sense of the partial order), it is called a lower set of quads. Then $\mathcal{H}$ is the space of all closed lower sets of quads. For a quad $Q$, we define the event $\hc{Q}\subset\mathcal{H}$ that the quad $Q$ is crossed: it is the set of all lower sets which contain $Q$. The space $\mathcal{H}$ is equipped with the so-called Quad-Crossing-Topology, which is the minimal topology containing all $(\hc{Q})^c$ and other certain lower sets of quads. The induced Borel-$\s$-algebra $\B(\mathcal{H})$ is generated by the events $\hc{Q}$. For $D\subset\C$, let $\B_D$ be the restriction of $\B(\mathcal{H})$ to lower sets of quads inside $D$.

Any configuration $\bar{\om}_\e\in\bar{\Om}_\e$ induces an element of $\mathcal{H}$, namely the set $\om_\e$ of all quads, which contain a blue crossing, i.e.\ a crossing which is a subset of the union of all blue tiles. Note that this is a closed lower set. Thus, for all $\e>0$ and $\p:H_\e'\to[0,1]$, the measure $\bar{P}^\p_\e$ induces a probability measure $P^\p_\e$ on $(\mathcal{H},\B(\mathcal{H}))$. We will mainly work with these probability measures.

Now we define a special measure on $\mathcal{H}$, namely the critical measure $P_\e^0$. It is induced by $\bar{P}^\p_\e$ with $\p(t)=p_\e^\text{crit}$ for all tiles $t\in H_\e'$. There $p_\e^\text{crit}$ is the critical probability of the tiling $H_\e$, i.e.\
$$ p_\e^\text{crit} := \sup\{p\in[0,1]\mid P^\p_\e[\text{There is an infinite blue cluster}]=0, \p(t)=p\,\forall t\in H_\e'\} \,.$$
In fact, we do not use criticality. Thus $p_\e^\text{crit}$ could be any number in $(0,1)$ such that the conditions below are satisfied. But they usually hold only if $p_\e^\text{crit}$ is indeed the critical probability.

For $z\in\C$ and $0<\e\le r<R$, let $A_4(z,r,R)$ be the event that there are four crossings of alternating colour inside the annulus centred at $z$ with radii $r$ and $R$. 

We fix some $R_0,N_0>0$ and $z_0\in\C$ for the remainder of the article. We want to define the nearcritical models. We abbreviate 
$$\al_\e^4 := P_\e^0[A_4(z_0,\e,R_0)]$$
and define the set
$$ \Pi_\e := \big\{ P^\p_\e \mid \p(t)=(p_\e^\text{crit}+\io_\e(t)\cdot\frac{\e^2}{\al_\e^4})\vee0\wedge1,\;\io_\e(t)\in[-N_0,N_0], \,t\in H_\e' \big\} \,,$$
the set of all probability measures on $(\mathcal{H},\B(\mathcal{H}))$ which are in the critical window. If we want to specify the chosen parameter $\io=(\io_\e(t))_{t\in H_\e'}$, we write $P^\io_\e$ for the corresponding measure. We therefore use the speed factor $\e^2/\al_\e^4$ for the convergence of the nearcritical  probabilities to the critical one. This rate is inspired by \cite[Theorem~4]{k87}, \cite[Proposition~32]{n7} and the results of \cite{gps10}. From Lemma~\ref{lem:characle} below and \cite[Proposition~4]{nw9}, it follows that $\e^2/\al_\e^4$ is indeed the correct rate.

\begin{cond}
We impose the following basic conditions on the tilings $H_\e$, $\e>0$. The constants $\e_0, c_1, c_2, c_3>0$ as well as the functions $\Delta_4$ and $\Delta_1$ may depend on $R_0$ and $N_0$. The words in \emph{italic} are only headings without any formal meaning. 
\begin{enumerate} 
 \item \label{en:4arm}
 \emph{The following multi-scale bound on the four arm event holds:} 
 
 There exists a positive function $\Delta_4(r,R)$ such that for all fixed $R\le R_0$
 $$\lim_{r\to0} \D_4(r,R) = 0$$
 and such that for all $\e\le r < R \le R_0$ 
 $$ P^0_\e[A_4(z_0,r,R)] \le \frac{r}{R}\D_4(r,R)\,. $$
 
 \item  \label{en:conv}
 \emph{The probabilities in the critical window are eventually strictly in between 0 and 1:}
 
 There exists $\e_0>0$ such that for all $\e\in(0,\e_0)$: 
 $$ 0 \,<\, p_\e^\text{crit} - N_0\frac{\e^2}{\al_\e^4} \,<\, p_\e^\text{crit} + N_0\frac{\e^2}{\al_\e^4} \,<\, 1 \,.$$
 
 \item \label{en:4armnc}
 \emph{The probabilities of the four-arm events are comparable on the whole plane over all (near)critical measures:} 
 
 There are constants $c_1,c_2>0$ such that for all $\e\le r< R \le  R_0$, $z\in\C$ and $P_\e\in\Pi_\e$ the following holds:
 $$
 \begin{array}{rcccl}
  c_1 \,P_\e^0[A_4(z_0,\e,R)] &\le& P_\e[A_4(z,\e,R)] &&\text{and}\\[1ex]
    && P_\e[A_4(z,r,R)] &\le& c_2\, P_\e^0[A_4(z_0,r,R)] \,.
 \end{array}
 $$
 (Note that we need the first inequality for $r=\e$ only.)

 \item  \label{en:sep}
 \emph{The probability of the four arm event is uniformly comparable to the probability of the following modified four arm event:} 
 
 For $R>0$ and $z\in\C$, let $Q(z,R)$ be the square with side length $R$ centred at $z$. For a tile $t$ in $Q(z,R)$ whose distance from $z$ is at most $R/4$, let  $A_4'(t,\del Q(z,R))$ be the event that there are four arms of alternating colour from $t$ to the left, lower, right and upper boundary of $Q(z,R)$, respectively.
 
 There exists a constant $c_3>0$ such that for all $4\e\le R \le R_0$, $z\in\C$, $P_\e\in\Pi_\e$ and all tiles $t$ in $Q(z,R)$ whose distance from $z$ is at most $R/4$:
 $$ P_\e[A_4'(t,\del Q(z,R))] \ge c_3 \,P_\e[A_4(z,\e,R)] \,.$$
 
 \item  \label{en:1arm}
 \emph{There is the following bound on the one arm event:}
 
 There exists a positive function $\Delta_1(r,R)$ such that for all fixed $R\le R_0$
 $$\lim_{r\to0} \D_1(r,R) = 0$$
 and such that for all $\e\le r < R \le R_0$, $z\in\C$, $P_\e\in\Pi_\e$ and $col\in\{\text{blue},\text{yellow}\}$
 $$ P_\e[A_1^{col}(z,r,R)] \le \D_1(r,R) \,,$$
 where $A_1^{col}(z,r,R)$ is the event that there exists a crossing of colour $col$ inside the annulus centred at $z$ with radii $r$ and $R$.
 \label{en:last}
\end{enumerate}
\end{cond}

Conditions \ref{en:4armnc} and \ref{en:4arm} imply
$$ P_\e[A_4(z,r,R)] \le \frac rR c_2 \Delta_4(r,R) $$
for all $z\in\C$ and $\e\le r< R \le R_0$ and $P_\e\in\Pi_\e$. This and condition \ref{en:1arm} are Assumptions 1.1.\ of \cite{ss11}. Therefore we can apply most results of that article, including \cite[Corollary 1.16]{ss11}, yielding that any family $P_\e\in\Pi_\e$, $\e>0$, is tight. Thus there exist nearcritical scaling limits, at least along subsequences.

Now we are ready to state the main theorem of the present note.

\begin{thm} \label{thm:fullsing}
 Let $H_\e$, $\e>0$, be locally finite trivalent tilings such that each tile of $H_\e$ has diameter at most $\e$, and such that conditions 1-\ref{en:last} are fulfilled. For $\e>0$, let measures $P^\mu_\e, P^\la_\e\in\Pi_\e$ be given by $\mu_\e(t),\la_\e(t)\in[-N_0,N_0]$, $t\in H_\e'$. Considering weak limits with respect to the Quad-Crossing-Topology, let $P^\mu$ be any weak limit point of $\{P^\mu_\e:\e>0\}$, let $P^\la$ be any weak limit point of $\{P^\la_\e:\e>0\}$ and let $\e_n$, $n\in\N$, be a sequence converging to zero such that $P^\mu_{\e_n}\to P^\mu$ and $P^\la_{\e_n}\to P^\la$ weakly as $n\to\infty$.
 
 Assume that there exist $\s>0$ and an open, non-empty set $D\subset\C$ such that
 $$ \la_\e(t) - \mu_\e(t) \,\ge\,\s $$
 uniformly in $\e\in\{\e_n:n\in\N\}$ and all tiles $t\in H_\e'$ which are contained in $D$. 
 
 Then the laws $P^\mu$ and $P^\la$ -- even restricted to $\B_D$ -- are singular with respect to each other.
\end{thm}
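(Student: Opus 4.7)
The plan is to produce a sequence of $\B_D$-measurable events $E_n$ with $P^\mu(E_n)\to 1$ and $P^\la(E_n)\to 0$; the mutual singularity of $P^\mu$ and $P^\la$ on $\B_D$ then follows by a standard Borel--Cantelli argument. I would obtain the $E_n$ from a counting statistic at a mesoscopic scale. Fix a closed square $D'\subset D$ and, for small $\de>0$, partition $D'$ into $M\asymp\de^{-2}$ pairwise disjoint axis-aligned closed sub-squares $Q_1,\ldots,Q_M$ of side $\de$, separated by a gap larger than the mesh $\e$. Define the $\B_D$-measurable functional
$$ N_\de(\om) \,=\, \sum_{i=1}^M\1_{\hc{Q_i}}(\om)\,. $$
The steps are to estimate the first two moments of $N_\de$ at mesh $\e$, pass to the subsequential scaling limits, and close with Chebyshev.

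For the means, I would apply Russo's formula along the linear interpolation $\io_r:=\mu_\e+r(\la_\e-\mu_\e)$ of parameters:
$$ P^\la_\e[\hc{Q_i}]-P^\mu_\e[\hc{Q_i}] \,=\, \int_0^1\sum_{t\in Q_i\cap H_\e'} (\la_\e(t)-\mu_\e(t))\,\frac{\e^2}{\al_\e^4}\,P^{\io_r}_\e[t\text{ pivotal for }\hc{Q_i}]\,dr\,. $$
Since $\io_r(t)\in[-N_0,N_0]$ we have $P^{\io_r}_\e\in\Pi_\e$; the pivotal event for the horizontal crossing of $Q_i$ is, up to a color-symmetry constant, the modified four-arm event $A_4'$ of condition~\ref{en:sep}. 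Thus condition~\ref{en:sep} with $R=\de$ followed by condition~\ref{en:4armnc} gives, uniformly in $r$ and $\e$,
$$ P^{\io_r}_\e[t\text{ pivotal for }\hc{Q_i}] \,\ge\, c\,P^0_\e[A_4(z_0,\e,\de)] $$
for every random tile $t$ within distance $\de/4$ of the centre of $Q_i$. There are $\asymp\de^2/\e^2$ such tiles, and on each tile inside $D$ the hypothesis gives $\la_\e(t)-\mu_\e(t)\ge\s$. Combining this with the elementary upper bound $\al_\e^4\le P^0_\e[A_4(z_0,\e,\de)]\cdot P^0_\e[A_4(z_0,\de,R_0)]$ (from the nesting $A_4(z_0,\e,R_0)\subset A_4(z_0,\e,\de)\cap A_4(z_0,\de,R_0)$ and independence on disjoint annuli) and condition~\ref{en:4arm} at scale $\de$ yields
$$ P^\la_\e[\hc{Q_i}]-P^\mu_\e[\hc{Q_i}] \,\ge\, c'\s\,\frac{\de}{\D_4(\de,R_0)} \,=:\, \s\,g(\de)\,.$$
Summing, $E^\la_\e[N_\de]-E^\mu_\e[N_\de]\ge c''\s\,g(\de)\,\de^{-2}$, while $g(\de)/\de=1/\D_4(\de,R_0)\to\infty$ by condition~\ref{en:4arm}.

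For the variance: since the $Q_i$ are separated by more than $\e$, the indicators $\1_{\hc{Q_i}}$ depend on disjoint sets of tiles in the discrete models and are thus independent under every measure in $\Pi_\e$, so $\Var^\mu_\e[N_\de],\,\Var^\la_\e[N_\de]\le M$. After perturbing each $Q_i$ slightly to a continuity point of $\hc{\cdot}$ under both $P^\mu$ and $P^\la$ (monotonicity in the side length and countability of jumps make a co-countable set work; cf.\ \cite{ss11}), weak convergence along $\e_n\to 0$ transfers these moment estimates to the limit. Writing $\Lambda_\de:=E^\la[N_\de]-E^\mu[N_\de]\ge c\s\,g(\de)\,\de^{-2}$, Chebyshev yields, for $\star\in\{\mu,\la\}$,
$$ P^\star\bigl[\,|N_\de-E^\star[N_\de]|\ge\Lambda_\de/3\,\bigr] \,\le\, \frac{9M}{\Lambda_\de^2} \,\lesssim\, \frac{\D_4(\de,R_0)^2}{\s^2} \,\to\, 0 $$
as $\de\to 0$. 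Taking $\de_n\to 0$ fast enough to make the bound summable and defining $E_n:=\{\,N_{\de_n}\le\tfrac12(E^\mu[N_{\de_n}]+E^\la[N_{\de_n}])\,\}$, Borel--Cantelli then gives $P^\mu(\liminf_n E_n)=1$ and $P^\la(\limsup_n E_n^c)=1$, which is the required singularity on $\B_D$.

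The main technical obstacle I foresee is the uniform per-cell pivotal lower bound, i.e.\ $P^{\io_r}_\e[t\text{ pivotal for }\hc{Q_i}]\ge c\,P^0_\e[A_4(z_0,\e,\de)]$ with constants independent of $r\in[0,1]$ and of $\e\le\de/4$. Conditions~\ref{en:sep} and~\ref{en:4armnc} supply precisely this kind of uniformity over $\Pi_\e$, but some bookkeeping is required (i) to identify the pivotal event for $\hc{Q_i}$ with the modified four-arm event $A_4'$ of condition~\ref{en:sep} up to negligible boundary effects, and (ii) to carry the quasi-multiplicativity step across the whole family of nearcritical measures rather than only at criticality. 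A minor secondary issue is the simultaneous perturbation of the $Q_i$ to continuity points of $\hc{\cdot}$ under both limits, which is standard in the Schramm--Smirnov framework.
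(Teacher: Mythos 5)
Your proposal is correct and mirrors the paper's proof: both reduce to a per-square lower bound $P^\la[\hc{Q}]-P^\mu[\hc{Q}]\gtrsim\de/\D_4(\de,R_0)$ (the content of Lemma~\ref{lem:estprob}), then exploit independence of disjoint-square crossing indicators, Chebyshev, and Borel--Cantelli (the content of Lemma~\ref{lem:abstrsing}). The only technical difference is that you derive the per-square bound via Russo's formula along a linear interpolation of parameters, whereas the paper uses an explicit tile-by-tile monotone coupling; the two are equivalent, both resting on conditions~\ref{en:sep} and~\ref{en:4armnc} for the uniform pivotal lower bound over $\Pi_\e$ and on \cite[Lemma~5.1]{ss11} (or your perturbation-to-continuity-points workaround) to pass the estimates to the weak limit.
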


Similarly to \cite[Corollary 2]{a12}, we can even detect the asymmetry by only looking at an infinitesimal neighbourhood of a point inside $D$, more precisely:

\begin{corol} \label{cor:restrict}
 Let the conditions of Theorem~\ref{thm:fullsing} be fulfilled. Let $z\in D$. Let
 $ \B_z := \bigcap_{n\in\N} \B_{B_{1/n}(z)} $
 be the tail-$\s$-algebra of the restrictions of $\B(\mathcal{H})$ to lower sets of quads in the ball $B_{1/n}(z)$.
 
 Then the laws $P^\mu$ and $P^\la$ restricted to $\B_z$ are singular with respect to each other.
\end{corol}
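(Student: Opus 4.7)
The plan is to deduce the corollary from Theorem~\ref{thm:fullsing} applied to shrinking open neighbourhoods of $z$, followed by a standard tail-limit construction. Since $D$ is open and $z\in D$, there is $n_0\in\N$ with $B_{1/n}(z)\subset D$ for all $n\geq n_0$. For such $n$, every tile $t\in H_\e'$ lying inside $B_{1/n}(z)$ also lies inside $D$, so the uniform gap $\la_\e(t)-\mu_\e(t)\geq\s$ is inherited. Theorem~\ref{thm:fullsing} therefore applies with $D$ replaced by $B_{1/n}(z)$ (for the same weak limit points $P^\mu,P^\la$ and the same subsequence), giving, for each $n\geq n_0$, an event $A_n\in\B_{B_{1/n}(z)}$ with $P^\mu(A_n)=1$ and $P^\la(A_n)=0$ (swap $A_n$ with its complement if necessary).

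Next I would form the tail event
\[ A \,:=\, \limsup_{n\to\infty} A_n \,=\, \bigcap_{M\geq n_0}\bigcup_{n\geq M} A_n \]
and verify that $A\in\B_z$. For any $K\geq n_0$ and any $n\geq K$, the inclusion $B_{1/n}(z)\subset B_{1/K}(z)$ implies $A_n\in\B_{B_{1/n}(z)}\subset\B_{B_{1/K}(z)}$. Hence every union $\bigcup_{n\geq M}A_n$ with $M\geq K$ lies in $\B_{B_{1/K}(z)}$, and since these unions are monotone decreasing in $M$,
\[ A \,=\, \bigcap_{M\geq K}\bigcup_{n\geq M}A_n \,\in\, \B_{B_{1/K}(z)}. \]
As $K\geq n_0$ was arbitrary, $A\in\bigcap_K\B_{B_{1/K}(z)}=\B_z$.

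For the singularity itself I would just invoke Borel--Cantelli twice. Since $\sum_n P^\la(A_n)=0$, the first Borel--Cantelli lemma yields $P^\la(A)=P^\la(\limsup_n A_n)=0$. Dually, $\sum_n P^\mu(A_n^c)=0$ gives $P^\mu(\liminf_n A_n)=1$, and $\liminf_n A_n\subset A$ forces $P^\mu(A)=1$. Thus $P^\mu$ and $P^\la$ assign full mass to the complementary $\B_z$-events $A$ and $A^c$, which is exactly the asserted singularity. There is no real obstacle here: all substantive work is already in Theorem~\ref{thm:fullsing}, and the only thing to be careful about is the bookkeeping with the monotone inclusions $B_{1/n}(z)\subset B_{1/K}(z)$ and the corresponding monotonicity of $\B_{B_{1/n}(z)}$ in $n$.
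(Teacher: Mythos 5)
Your proposal is correct and follows essentially the same route as the paper: shrink to balls $B_{1/n}(z)\subset D$, apply Theorem~\ref{thm:fullsing} in each to obtain separating events $A_n\in\B_{B_{1/n}(z)}$, and combine them into a tail event. The paper takes $B_*=\bigcup_m\bigcap_{n\ge m}B_n$ and notes directly that countable unions/intersections preserve probability $0$/$1$, whereas you take the $\limsup$ and invoke Borel--Cantelli, which is a slightly heavier but equivalent way to reach the same conclusion.
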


We base the proof of Theorem \ref{thm:fullsing} on the following two lemmas. The first one is specific for the model. The second one is rather abstract to detect the singularity.
\begin{lem} \label{lem:estprob}
 Under the conditions of Theorem~\ref{thm:fullsing}, there exists a function $\D_\s:\R^+\to\R^+$ with $\D_\s(\de)\to0$ as $\de\to0$ such that for any square $Q$ of side length $\de\le R_0$ inside $D$:
 $$ P^\la[\hc{Q}]-P^\mu[\hc{Q}] \ge \frac{\de}{\D_\s(\de)} \,,$$
 where $\hc{Q}$ denotes the event that there exists a horizontal blue crossing of the square $Q$.
\end{lem}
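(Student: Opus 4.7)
The plan is to establish the inequality uniformly at the discrete level using Russo's formula together with conditions \ref{en:4arm}, \ref{en:4armnc} and \ref{en:sep}, and then to pass to the weak limit. First, interpolate linearly between $\mu_\e$ and $\la_\e$: for $s\in[0,1]$, set $\io_s:=(1-s)\mu_\e+s\la_\e$. Since $P^{\io_s}_\e$ is a product measure and $\p(t)$ depends linearly on $\io_\e(t)$ in the regime guaranteed by condition \ref{en:conv}, Russo's formula combined with the chain rule yields
$$ P^\la_\e[\hc{Q}] - P^\mu_\e[\hc{Q}] = \int_0^1 \sum_{t\in H_\e'} (\la_\e(t)-\mu_\e(t))\,\frac{\e^2}{\al_\e^4}\, P^{\io_s}_\e\big[t\text{ pivotal for }\hc{Q}\big]\, ds. $$
Pivotal tiles for $\hc{Q}$ lie inside $Q\subset D$, where $\la_\e(t)-\mu_\e(t)\ge\s$ by hypothesis; hence every summand is non-negative and may be restricted to any convenient subset of tiles.

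Next, I would bound the pivotal probabilities from below. Restrict the sum to tiles $t$ in a concentric sub-square $Q^*\subset Q$ whose points all lie within Euclidean distance $\de/4$ of the centre of $Q$ (so that condition \ref{en:sep} applies with $R=\de$), and which still has area of order $\de^2$. For such $t$, pivotality for $\hc{Q}$ forces four arms of a specific alternating colour pattern from $t$ to the four sides of $Q$; up to a factor of $\2$ accounting for the two possible alternating patterns, this gives $P^{\io_s}_\e[t\text{ pivotal}]\ge \2\,P^{\io_s}_\e[A_4'(t,\del Q)]$. Applying conditions \ref{en:sep} and \ref{en:4armnc} successively yields $P^{\io_s}_\e[t\text{ pivotal}]\ge \tfrac{c_1 c_3}{2}\,P^0_\e[A_4(z_0,\e,\de)]$. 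Since $Q^*$ contains at least $C_1 \de^2/\e^2$ random tiles by local finiteness and the diameter bound, summing and integrating produces
$$ P^\la_\e[\hc{Q}] - P^\mu_\e[\hc{Q}] \,\ge\, C_2\s\,\frac{\de^2}{\al_\e^4}\, P^0_\e[A_4(z_0,\e,\de)]. $$

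Third, I would lower bound the ratio $P^0_\e[A_4(z_0,\e,\de)]/\al_\e^4$ by the easy direction of quasi-multiplicativity. The events $A_4(z_0,\e,\de)$ and $A_4(z_0,\de,R_0)$ depend on disjoint annuli (up to a boundary layer of width $O(\e)$, harmless after slight shrinking) and are thus independent under the product measure $P^0_\e$, while trivially $A_4(z_0,\e,R_0)\subset A_4(z_0,\e,\de)\cap A_4(z_0,\de,R_0)$. Hence
$$ \al_\e^4 \le P^0_\e[A_4(z_0,\e,\de)]\cdot P^0_\e[A_4(z_0,\de,R_0)] \le P^0_\e[A_4(z_0,\e,\de)]\cdot\tfrac{\de}{R_0}\,\D_4(\de,R_0), $$
using condition \ref{en:4arm}. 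Substituting into the previous display gives $P^\la_\e[\hc{Q}]-P^\mu_\e[\hc{Q}]\ge C_3\s R_0\de/\D_4(\de,R_0)$, which is of the required form $\de/\D_\s(\de)$ with $\D_\s(\de)\to 0$ as $\de\to 0$ by condition \ref{en:4arm}.

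Finally, passing to the weak limit along $\e_n\to0$ in the Quad-Crossing-Topology is carried out in the standard Schramm--Smirnov manner by sandwiching $\hc{Q}$ between crossing events of slightly smaller and larger quads to avoid the boundary discontinuities of $\hc{Q}$, so that the same lower bound survives in the limit. The main obstacle is the careful pivotal-to-four-arm identification with the right colour bookkeeping and the treatment of tiles near $\del Q$, which is precisely why one restricts to the concentric sub-square $Q^*$; everything else follows mechanically from the stated conditions.
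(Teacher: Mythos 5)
Your proof is correct, but it takes a genuinely different route from the paper's at the key discrete step. The paper constructs an explicit monotone coupling $\hat P$ of $P^\mu_\e$ and $P^\la_\e$ such that blue tiles in $Q$ only increase, rewrites $P^\la_\e[\hc Q]-P^\mu_\e[\hc Q]$ as the $\hat P$-probability of a crossing appearing, and then decomposes this event by the \emph{first} tile whose switch creates the crossing, obtaining a sum of (pivotal probability)$\times$(switch probability). You instead differentiate along the linear interpolation $\io_s=(1-s)\mu_\e+s\la_\e$ and apply the inhomogeneous Russo formula with the chain rule, which gives an integral of the same pivotal sums. Both routes are valid and land on identical applications of conditions \ref{en:sep}, \ref{en:4armnc}, quasi-multiplicativity of $A_4$, and condition \ref{en:4arm}. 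The coupling route is purely combinatorial and avoids derivatives; the Russo route is shorter once one accepts Russo's formula. One should note, as you do, that the truncation $\vee0\wedge1$ in the definition of $\Pi_\e$ is inactive for $\e<\e_0$ by condition \ref{en:conv}, so that $\p_s(t)$ really is affine in $s$ and the chain rule applies; and that $\io_s(t)\in[-N_0,N_0]$ by convexity, so $P^{\io_s}_\e\in\Pi_\e$ and conditions \ref{en:sep}, \ref{en:4armnc} apply at every $s$. Two small cosmetic remarks: the factor $\tfrac12$ you insert in passing from pivotality to $A_4'(t,\del Q)$ is unnecessary (the paper's $A_4'$ \emph{is} the pivotality pattern for $\hc Q$, so equality holds), and indeed the inequality as written, $P[t\text{ pivotal}]\ge\tfrac12 P[A_4']$, would need an extra symmetry argument if $A_4'$ were the union of both alternating patterns -- better to just identify them. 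Also, the independence of $A_4(z_0,\e,\de)$ and $A_4(z_0,\de,R_0)$ which you flag (tiles straddling the circle of radius $\de$) is handled in the paper by simply asserting independence; your note about a harmless boundary layer is a reasonable way to make it precise.
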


\begin{lem} \label{lem:abstrsing}
 Let $P$ and $P'$ be two probability measures on a space $(\Om,\A)$. Let $a,b>0$ and let $(\D_n)_{n\in\N}$ be a positive sequence converging to infinity. Set $K_n:=\lceil an^2\rceil$, $n\in\N$. For large enough $n\in\N$, let $X^n_k$, $k\in\{1,\ldots,K_n\}$, be random variables which are uncorrelated in $k$ with respect to $P$ and $P'$, absolutely bounded by $b$, and satisfy
 $$ E_{P'}[X^n_k] - E_P[X^n_k] \ge \tfrac1n\D_n\,,\qquad k\in\{1,\ldots,K_n\}\,. $$
 Then $P$ and $P'$ are singular with respect to each other.
\end{lem}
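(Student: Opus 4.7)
The plan is to apply a second-moment / Chebyshev argument to the empirical averages and then separate the two measures via Borel--Cantelli along a subsequence.

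For $n$ large enough define
$$ S_n \,:=\, \frac{1}{K_n}\sum_{k=1}^{K_n} X^n_k \,. $$
By linearity of expectation, the gap of means satisfies
$$ E_{P'}[S_n] - E_P[S_n] \,\ge\, \frac{\D_n}{n} \,. $$
Because the $X^n_k$ are uncorrelated in $k$ under each measure and bounded by $b$ in absolute value, each $\Var(X^n_k)\le (2b)^2$ under both $P$ and $P'$, so
$$ \Var_P(S_n) \,\le\, \frac{4b^2}{K_n} \,\le\, \frac{4b^2}{an^2} \,, $$
and the same bound holds under $P'$. Chebyshev's inequality then gives
$$ P\Big[\,|S_n - E_P[S_n]| \ge \tfrac{\D_n}{2n}\,\Big] \,\le\, \frac{16 b^2}{a\, \D_n^2} \quad\text{and}\quad P'\Big[\,|S_n - E_{P'}[S_n]| \ge \tfrac{\D_n}{2n}\,\Big] \,\le\, \frac{16 b^2}{a\, \D_n^2}\,. $$
Since $E_{P'}[S_n]-E_P[S_n]\ge\D_n/n$, the events
$$ A_n \,:=\, \Big\{ S_n \le E_P[S_n] + \tfrac{\D_n}{2n}\Big\} $$
satisfy $P[A_n^c]\le 16b^2/(a\D_n^2)$ and $P'[A_n]\le 16b^2/(a\D_n^2)$, both tending to $0$ because $\D_n\to\infty$.

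The only subtlety is that $16b^2/(a\D_n^2)$ need not be summable in $n$, which is what one wants in order to invoke Borel--Cantelli. This is easily bypassed by passing to a subsequence: since $\D_n\to\infty$, one can pick indices $n_1<n_2<\ldots$ with $\D_{n_j}\ge j$, so that $\sum_j 1/\D_{n_j}^2<\infty$. Borel--Cantelli applied to the sequence $(A_{n_j})$ then yields
$$ P\big[\,\liminf_{j\to\infty} A_{n_j}\,\big] \,=\, 1 \qquad\text{and}\qquad P'\big[\,\limsup_{j\to\infty} A_{n_j}\,\big] \,=\, 0\,, $$
i.e.\ $P'[\liminf_j A_{n_j}^c]=1$. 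Since $\liminf_j A_{n_j}$ and $\liminf_j A_{n_j}^c$ are disjoint events, this exhibits a measurable set carrying all the mass of $P$ and none of $P'$, proving $P\perp P'$.

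The step I expect to be technically simplest but conceptually central is the variance bound via uncorrelatedness, which turns the mean gap $\D_n/n$ into a statistically detectable separation of order $\D_n$ standard deviations. The only step that needs a small trick is the last one: the Chebyshev bound gives $o(1)$ but not summability, hence the passage to a subsequence before applying Borel--Cantelli.
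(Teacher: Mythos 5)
Your proposal is correct and follows essentially the same route as the paper: center/normalize the sum, bound its variance via uncorrelatedness, apply Chebyshev on both sides, and pass to a sparse subsequence so that Borel--Cantelli applies. The only differences are cosmetic: you work with the empirical average $S_n$ while the paper uses the centered sum $Z_n=\sum_k(X^n_k-E_P[X^n_k])$, and your variance bound $\Var(X^n_k)\le(2b)^2$ is valid but not tight (the paper uses $\Var(X^n_k)\le b^2$), which costs nothing.
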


Using results of \cite{sw1}, \cite[Appendix B]{ss11}, \cite{n7} and \cite{k87} as well as standard techniques, we can easily verify conditions 1-\ref{en:last} in the two most important cases:
\begin{lem} \label{lem:cond}
 Conditions 1 to \ref{en:last} are fulfilled by tilings representing site percolation on the triangular lattice or bond percolation on the square lattice.
\end{lem}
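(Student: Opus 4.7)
The plan is to verify the five conditions one by one, in each case citing the standard tool from RSW theory, Kesten's scaling relations, and the arm separation lemmas, and observing that on both lattices the arguments apply almost verbatim. Throughout, $p_\e^\text{crit}=1/2$ by self-duality (Harris--Kesten), and the usual quasi-multiplicativity and colour-switching symmetries are available.

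For condition \ref{en:4arm} I would invoke the fact that the critical four-arm exponent strictly exceeds $1$: on the triangular lattice it equals $5/4$ by Smirnov--Werner \cite{sw1}, and on the square lattice Kesten \cite{k87} proved a polynomial improvement $P_\e^0[A_4(z_0,r,R)] \le C(r/R)^{1+\al}$ for some $\al>0$. In both cases this yields $\D_4(r,R):=C(r/R)^\al$. Condition \ref{en:conv} is then automatic: applying the four-arm estimate with $r=\e$, $R=R_0$ gives $\al_\e^4 \ge c\,\e^{(1+\al)/4}$ (via a standard RSW lower bound on the four-arm probability), so $\e^2/\al_\e^4\to 0$ as $\e\to 0$, and $p_\e^\text{crit}\pm N_0\,\e^2/\al_\e^4$ lies strictly inside $(0,1)$ for small $\e$.

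Condition \ref{en:4armnc} is the stability of the four-arm event throughout the critical window together with translation invariance of the (approximate) arm-exponent estimates. The upper bound is obtained by the FKG-type monotone coupling together with Kesten's scaling-relation argument \cite{k87}, which shows that the four-arm probability changes by at most a constant factor when $p$ varies by $O(\e^2/\al_\e^4)$. The extension to the triangular lattice is carried out in \cite{n7}. The shift in the base point from $z_0$ to arbitrary $z\in\C$ follows by using the lattice's translational symmetry (on both lattices the critical four-arm probability is translation-invariant up to a bounded distortion coming from the tile at $z$).

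Condition \ref{en:sep} is the well-separation / arm separation lemma, proved in \cite{k87} for the square lattice and in \cite{n7} for the triangular lattice, combined with colour-switching and RSW gluing to reinstate the four arms with prescribed landing sides; the uniformity over $P_\e\in\Pi_\e$ comes again from Kesten's near-critical stability. Finally, condition \ref{en:1arm} is a direct consequence of RSW: iterating the annulus-crossing estimate (see \cite[Appendix B]{ss11}) yields a polynomial bound $\D_1(r,R):=C(r/R)^\beta$ for some $\beta>0$, uniformly in $P_\e\in\Pi_\e$ by the stability of RSW under the $O(\e^2/\al_\e^4)$-perturbation of the parameter. The main obstacle, as remarked in the paper's introduction, is that the arm separation lemmas of \cite{k87} and \cite{n7} are written only for the two specific lattices treated here; for the target lattices themselves, however, we may cite them verbatim and no new work is required beyond assembling the ingredients above.
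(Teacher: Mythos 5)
Your high-level plan matches the paper's, and most of the ingredients you cite (RSW iteration for the one-arm bound, Kesten's and Nolin's near-critical arm estimates for conditions~3 and~4, arm separation for condition~4) are the correct ones. But you have skipped what is, in the paper's account, the central technical step: before you can invoke Kesten's or Nolin's near-critical stability results uniformly over $P_\e\in\Pi_\e$, you must show that the macroscopic scale $R_0$ sits \emph{below the characteristic length} of every measure in the window, i.e.\ that there is an $\ep$ with $R_0/\e\le L_\ep(p_\e^{\pm N_0})$ for all small $\e$. Kesten's results (and their triangular analogues in \cite{n7}) hold only on scales $n\lesssim L_\ep(p)$; you assert ``uniformity over $P_\e\in\Pi_\e$ comes again from Kesten's near-critical stability'' without checking that the window $\e^2/\al_\e^4$ really puts you in that regime. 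This is precisely why the paper first proves Lemma~\ref{lem:characle} (the converse of Nolin's/Kesten's characteristic-length estimate) and Lemma~\ref{lem:incrL} and uses them together with the identity $|p_\e^{-N_0}-p_c|(R_0/\e)^2 P_\e^0[A_4(z_0,\e,R_0)]=R_0^2 N_0$ to deduce $R_0/\e\le L_\ep(p_\e^{-N_0})$. Without this step, the applications of \cite[Theorem~26]{n7} and \cite[Lemmas~4 and~8]{k87} in your verification of conditions~3--5 are unjustified.

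Two smaller points. For condition~\ref{en:conv} your estimate ``$\al_\e^4\ge c\,\e^{(1+\al)/4}$'' has the wrong exponent and is not what a lower bound on the four-arm probability gives; what you need is that the four-arm exponent is strictly less than $2$, and the paper obtains this cleanly and lattice-independently from the $5$-arm exponent equal to $2$ plus Reimer's inequality: $\tilde c R_0^{-2}\e^2\le P_\e^0[A_5]\le P_\e^0[A_4]\cdot P_\e^0[A_1]$, so that $\e^2/\al_\e^4\le c' P_\e^0[A_1(z_0,\e,R_0)]\to0$ by condition~\ref{en:1arm}. Also, for condition~\ref{en:4arm} on the square lattice the paper relies on Garban's Lemma~B.1 in \cite{ss11}, not on Kesten \cite{k87}; I am not aware of a polynomial improvement $(r/R)^{1+\al}$ on the square-lattice four-arm probability appearing directly in \cite{k87}.
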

Thereto we will need the following converse of \cite[Proposition 32]{n7}, which estimates the characteristic length. For the remainder of this section, we consider site percolation on the triangular lattice or bond percolation on the square lattice, each with mesh size $1$. Let $p_c=\2$ be the critical probability. For $\ep\in(0,\2)$ and $p\in(0,1)$, let $L_\ep(p)$ be the corresponding characteristic length as defined in \cite[Section 3.1]{n7} or \cite[Equation (1.21)]{k87}, respectively, i.e.\
$$ L_\ep(p) := \begin{cases}
                \inf\{n\in\N: P_p[\hc{(n\times n)}] \le \ep\} & \text{ if }p<p_c \\
                \inf\{n\in\N: P_p[\hc{(n\times n)}] \ge 1-\ep\} & \text{ if }p>p_c
               \end{cases} $$
and $L_\ep(p_c)=\infty$, where $P_p$ denotes the product measure with probability $p$ for blue, and $\hc{(m\times n)}$ denotes the event that there is a horizontal blue crossing of a an $m\times n$ rectangle.
 
Moreover, for $m<n$, let $\al_4(m,n)$ be the probability that at critical percolation there exist four arms of alternating colour inside the annulus centred at the origin with radii $m$ and $n$. We abbreviate $\al_4(n):=\al_4(1,n)$.

\begin{lem} \label{lem:characle}
 For all $\ep\in(0,\2)$ and $C_1,C_2>0$, there exist $C_3,C_4>0$ such that for all $p\in(0,1)$ and $n\ge1$ the following implication holds:
 $$ C_1 \,\le\, |p-p_c| n^2 \al_4(n) \,\le\, C_2 \quad \Longrightarrow \quad C_3 \,\le\, \frac{n}{L_\ep(p)} \,\le\, C_4 \,.$$
\end{lem}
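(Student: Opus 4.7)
The plan is to reduce to the two-sided estimate at the characteristic length given by \cite[Proposition~32]{n7} (and its counterpart in \cite{k87} for the square lattice) and then use quasi-multiplicativity of the four-arm event to transfer the comparison between the scales $L_\ep(p)$ and $n$. Since the hypothesis $|p - p_c|\, n^2 \alpha_4(n) \ge C_1 > 0$ excludes $p = p_c$, I may assume $L := L_\ep(p) < \infty$. Proposition~32 yields constants $c_\ep, c_\ep' > 0$ depending only on $\ep$ with $c_\ep \le |p - p_c| L^2 \alpha_4(L) \le c_\ep'$. Dividing by the hypothesis gives
\[ \frac{c_\ep}{C_2} \,\le\, \frac{L^2 \alpha_4(L)}{n^2 \alpha_4(n)} \,\le\, \frac{c_\ep'}{C_1}. \]

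It therefore suffices to show that the map $N \mapsto N^2 \alpha_4(N)$ is polynomially monotone: for some $\gamma > 0$ and all $1 \le m \le M$,
\[ \frac{M^2 \alpha_4(M)}{m^2 \alpha_4(m)} \,\ge\, c\, (M/m)^{\gamma}. \]
Granted this, setting $(m, M) = (\min(n, L), \max(n, L))$ immediately bounds $\max(n, L)/\min(n, L)$ from above and so pins $n/L_\ep(p)$ into a bounded interval $[C_3, C_4]$. To establish the polynomial monotonicity I combine quasi-multiplicativity $\alpha_4(M) \asymp \alpha_4(m)\, \alpha_4(m, M)$ with the polynomial lower bound $\alpha_4(m, M) \ge c (m/M)^{2-\delta}$ for some $\delta > 0$ uniform in $1 \le m \le M$; multiplying out yields $M^2 \alpha_4(M)/(m^2 \alpha_4(m)) \asymp (M/m)^2 \alpha_4(m, M) \ge c' (M/m)^{\delta}$.

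The main obstacle is ensuring the uniform polynomial lower bound $\alpha_4(m, M) \ge c (m/M)^{2-\delta}$ on both lattices. On the triangular lattice this is immediate from the Smirnov--Werner value $5/4$ of the four-arm exponent, which is strictly less than the five-arm exponent $2$. On the square lattice it is a classical theorem of Kesten (\cite{k87}), obtained by a comparison of the four-arm event with the five-arm event (whose exponent is exactly $2$ via Reimer's inequality). All remaining ingredients --- Proposition~32 itself and quasi-multiplicativity of $\alpha_4$ --- are standard and uniformly available on both lattices, so once the polynomial lower bound is invoked the argument concludes with only elementary manipulations.
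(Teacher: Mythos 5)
Your proposal is correct and follows essentially the same route as the paper: both invoke the two-sided characteristic-length estimate (Nolin's Proposition~32 / Kesten's Theorem~4), quasi-multiplicativity of $\al_4$, and the polynomial lower bound $\al_4(m,M)\ge C_6(m/M)^{2-\beta}$ (derived from the five-arm exponent via Reimer's inequality), and then compare the two scales $n$ and $L_\ep(p)$ by noting that $N\mapsto N^2\al_4(N)$ increases at least polynomially. The paper simply treats the two directions $n>L$ and $n<L$ separately with explicit constants, while you phrase the same symmetric computation through $\min/\max$; this is a presentational rather than a mathematical difference.
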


Finally, we need the following lemma, which restates Remark 36 of \cite{n7}. Since the author is not aware of a formal statement in the literature, it is included here for the sake of completeness.
\begin{lem} \label{lem:incrL}
 For all $\ep_0\in(0,\2)$ and all $K\ge1$, there exists an $\ep\in(0,\ep_0)$ such that for all $0<p<p_c$:
 $$ L_\ep(p) \ge K\cdot L_{\ep_0}(p) \,. $$
\end{lem}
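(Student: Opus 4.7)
My plan is a Russo--Seymour--Welsh plus FKG-gluing bootstrap at the scale $n := L_{\ep_0}(p)$. Since $p < p_c$, $n$ is finite; I first treat the main case $n \ge K$. By minimality in the definition, $P_p[\hc{(n-1)\times(n-1)}] > \ep_0$, and since $p < p_c = \2$, this probability is also strictly below $\2 \le 1-\ep_0$ by self-duality on the square lattice (resp.\ triangular lattice). By square symmetry and duality the analogous bounds hold for the yellow colour and for vertical crossings, so all four crossing probabilities of an $(n-1)\times(n-1)$ square lie in $[\ep_0, 1-\ep_0]$ with bounds depending only on $\ep_0$. This is precisely the input needed to run RSW at scale $n-1$ with constants independent of $p$.

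Given this input, I bootstrap to large scales in two steps. First, RSW produces a constant $h(\ep_0, \rho) > 0$, depending only on $\ep_0$ and the aspect ratio $\rho \ge 1$, such that any $\rho(n-1) \times (n-1)$ rectangle is blue-crossed in the long direction with probability at least $h(\ep_0,\rho)$. Second, I chain $O(K)$ such rectangles together with $O(K)$ vertical blue ``bridges'' in the overlap regions (themselves rectangles at scale $n-1$ whose crossing probabilities are again controlled by RSW); since the joint event is increasing, FKG gives the product lower bound
$$ P_p[\hc{(K+1)(n-1) \times (K+1)(n-1)}] \ge h(\ep_0,\rho)^{O(K)} =: c(\ep_0,K) > 0. $$
For $n \ge K$ we have $(K+1)(n-1) \ge Kn - 1$, so monotonicity of $P_p[\hc{m\times m}]$ in $m$ yields $P_p[\hc{(KL_{\ep_0}(p)-1) \times (KL_{\ep_0}(p)-1)}] \ge c(\ep_0, K)$. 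Choosing any $\ep \in (0, c(\ep_0, K))$ then forces $L_\ep(p) \ge K L_{\ep_0}(p)$ in this regime.

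In the residual regime $L_{\ep_0}(p) < K$, the target scale $KL_{\ep_0}(p)$ is bounded by the constant $K(K-1)$, so the inequality reduces to $L_\ep(p) \ge K(K-1)$; either the same RSW-gluing argument can be restarted at the largest scale at which $P_p[\hc{m\times m}]$ still exceeds a fixed threshold, or one observes that the relevant $p$ lie in a compact subinterval of $(0, p_c)$ and concludes by continuity of $P_p[\hc{m\times m}]$ on a finite set of scales. The main obstacle is step one: verifying that the RSW bounds from scale $n-1$ indeed depend only on $\ep_0$ and not on $p$. This is classical on both lattices but must be traced explicitly (and relies in an essential way on the uniform two-sided bound $[\ep_0, 1-\ep_0]$). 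A secondary bookkeeping point is the off-by-one between $(K+1)(n-1)$ and $KL_{\ep_0}(p)-1$, which is why the gluing is carried out with $K+1$ pieces rather than $K$.
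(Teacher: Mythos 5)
Your overall strategy coincides with the paper's: apply RSW at the scale of $L_{\ep_0}(p)$ to lower-bound crossing probabilities at all scales below $K L_{\ep_0}(p)$, and then pick $\ep$ below that lower bound. You are also right to work at scale $n-1 = L_{\ep_0}(p)-1$, where the definition gives the strict bound $P_p[\hc{(n-1)\times(n-1)}] > \ep_0$; the paper's own proof applies RSW at scale $L$, where in fact $P_p[\hc{L\times L}] \le \ep_0$, so your bookkeeping is more careful on this point.

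There is, however, a genuine gap in the transfer step. You lower-bound $P_p[\hc{(K+1)(n-1)\times(K+1)(n-1)}]$ and then appeal to ``monotonicity of $P_p[\hc{m\times m}]$ in $m$'' to carry this down to $m = KL_{\ep_0}(p)-1$ (and, implicitly, to every $m$ strictly between $n-1$ and $KL_{\ep_0}(p)$, since you must control \emph{all} scales below $KL_{\ep_0}(p)$, not just the largest one). But $m\mapsto P_p[\hc{m\times m}]$ is not known to be monotone: enlarging the square simultaneously widens it (harder to cross) and makes it taller (easier), and a left--right crossing of the larger square need not touch the smaller one. The paper never introduces a big square: RSW directly gives $P_p[\hc{KL\times L}] \ge f_K(\ep_0)$, and a horizontal crossing of this wide, short rectangle, when matched to the $n\times n$ square at the upper-left corner, restricts to a crossing of $n\times n$ for every $L< n< KL$. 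Your ``first step'' already produces the analogous rectangle bound $P_p[\hc{(K+1)(n-1)\times (n-1)}]\ge h(\ep_0,K+1)$; using that same restriction argument on the rectangle immediately bounds $P_p[\hc{m\times m}]$ for all $n-1\le m\le (K+1)(n-1)$, and the subsequent passage to a big square together with the monotonicity appeal should simply be deleted.

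Two secondary remarks. The two-sided bound $P_p[\hc{(n-1)\times(n-1)}]\in[\ep_0,1-\ep_0]$ is unnecessary: the RSW statement the paper invokes (Theorem 2 of Nolin) is one-sided and needs only the lower bound $\ge\ep_0$, so the self-duality discussion can be dropped (and as written it is also slightly imprecise, since the self-dual rectangle is not the $n\times n$ square on either lattice). In the residual regime $L_{\ep_0}(p)<K$, the ``compact subinterval of $(0,p_c)$'' alternative fails: $L_{\ep_0}(p)$ is small precisely for $p$ near $0$, and there $P_p[\hc{m\times m}]\to0$ for every fixed $m$, so those $p$ do not live in a compact subset of $(0,p_c)$ and no single $\ep>0$ can force $L_\ep(p)\ge K$. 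The paper's proof does not treat this regime either (in the one place the lemma is used, $L_{\ep_0}$ is large), so this is not a defect relative to the paper, but the compactness claim as stated is wrong and should be removed.
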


\section{Proofs} \label{sec:proofs}

In this section, we give the proofs of all stated assertions.

\begin{proof}[Proof of Lemma \ref{lem:estprob}]
 Let $Q$ be a square of side length $\de\le R_0$ inside $D$. Let $\e\in\{\e_n:n\in\N\}$ be small enough such that $4\e<\de$ and $\e<\e_0$, where $\e_0$ is chosen according to condition \ref{en:conv}.
 
 We construct a coupling $(\hat{\Om},\hat{\A},\hat{P})$ as follows. Let
 $$ \hat{\Om} := \big(\{\text{blue,yellow}\}\times\{\text{blue,yellow}\}\big)^{H_\e'} $$
 with product-$\s$-algebra $\hat{\A}$. Informally, let $\hat{P}$ be the probability measure which has marginal distributions $\bar{P}^\mu_\e$ and $\bar{P}^\la_\e$ such that the set of blue tiles in $Q$ increases. More precisely, we define the random variables 
 $$f_I:\hat{\Om}\to\mathcal{H}\,,\quad I\in\{1,2\}^{H_\e'}\,.$$
 For $\hat{\om}=(\hat{\om}_1(t),\hat{\om}_2(t))_{t\in H_\e'}\in\hat{\Om}$, let $f_I(\hat{\om})$ be the set of all quads which contain a blue crossing if tile $t\in H_\e'$ is coloured with colour $\hat{\om}_{I(t)}(t)$. We abbreviate $\langle1\rangle:=(1,\ldots,1)$ and $\langle2\rangle:=(2,\ldots,2)$. Then let $\hat{P}$ be a probability measure on $\hat{\Om}$ such that
 $$ \begin{array}{c}
  f_{\langle1\rangle}\big(\hat{P}\big)=P^\mu_\e \,,\qquad f_{\langle2\rangle}\big(\hat{P}\big)=P^\la_\e \qquad\text{and} \\[1ex]
  \hat{P}\big[\hat{\om}: \hat{\om}_1(t)=\text{blue},\,\hat{\om}_2(t)=\text{yellow} \text{ for some tile $t$ in } Q\big] = 0 \,.
 \end{array} $$
 Such a coupling can be obtained, for example, from the standard monotone coupling using independent, uniformly on $[0,1]$ distributed random variables as $\la_\e(t)>\mu_\e(t)$ in $Q$. 
 
 It follows that
 \begin{eqnarray*}
  P^\la_\e[\hc{Q}]-P^\mu_\e[\hc{Q}]
  &=& \hat{P}\big[f_{\langle2\rangle}^{-1}[\hc{Q}]\setminus f_{\langle1\rangle}^{-1}[\hc{Q}]\big]
     -\hat{P}\big[f_{\langle1\rangle}^{-1}[\hc{Q}]\setminus f_{\langle2\rangle}^{-1}[\hc{Q}]\big]\\
  &=& \hat{P}\big[f_{\langle1\rangle}^{-1}[\hc{Q}]^c \cap f_{\langle2\rangle}^{-1}[\hc{Q}]\big] - 0 \,, 
 \end{eqnarray*}
 since $\hat{\om}\in f_{\langle1\rangle}^{-1}[\hc{Q}]\setminus f_{\langle2\rangle}^{-1}[\hc{Q}]$ implies that there is a tile $t$ in $Q$ with $\hat{\om}_1(t)=\text{blue}$ and $\hat{\om}_2(t)=\text{yellow}$. Thus we have to estimate the probability of the event of all $\hat{\om}=(\hat{\om}_1,\hat{\om}_2)$ such that $\hat{\om}_2$ induces a blue crossing of $Q$, but $\hat{\om}_1$ does not.
 
 Let $\T=\{t_1,\ldots,t_K\}$ be the set of all tiles in $Q$ whose distance from the centre $z_Q$ of $Q$ is at most $\de/4$ -- arranged in any (but fixed) order. In order to prove the proposed estimate, we restrict ourselves to the event that the crossing arises out of switches from yellow to blue of some tiles in $\T$. Thereto we change the coordinates of $\hat{\om}$ we use for the tiles in $\T$ one by one. Formally, for $k=0,\ldots,K$, let $I_k\in\{1,2\}^{H_\e'}$ be defined by $I_k(t)=1$ if $t\in H_\e'\setminus\{t_1,\ldots,t_k\}$, and $I_k(t)=2$ if $t\in\{t_1,\ldots,t_k\}$. Then
 $$ \hat{P}\big[f_{\langle1\rangle}^{-1}[\hc{Q}]^c \cap f_{\langle2\rangle}^{-1}[\hc{Q}]\big]
    \ge \hat{P}\big[\bigcup_{k=1}^K f_{I_{k-1}}^{-1}[\hc{Q}]^c \cap f_{I_k}^{-1}[\hc{Q}]\big] \,.$$
 As the crossing event is increasing, the event $f_{I_{k-1}}^{-1}[\hc{Q}]^c \cap f_{I_k}^{-1}[\hc{Q}]$ can happen only for one $k\in\{1,\ldots,K\}$. This is the case if and only if the following two events occur: first, the event $f_{I_k}^{-1}[A_4'(t_k,\del Q)]$ that there are four arms of alternating colour from $t_k$ to the left, lower, right and upper boundary of $Q$, respectively, which means that $t_k$ is pivotal for the crossing event; second, the event that the colour of $t_k$ switches from $\hat{\om}_1(t_k)=$ yellow to $\hat{\om}_2(t_k)=$ blue, which we denote by $Sw(t_k)$. Note that they are independent events. Using the described disjointness and independence, we get
 $$ \hat{P}\big[\bigcup_{k=1}^K f_{I_{k-1}}^{-1}[\hc{Q}]^c \cap f_{I_k}^{-1}[\hc{Q}]\big]
    = \sum_{k=1}^K \hat{P}\big[f_{I_k}^{-1}[A_4'(t_k,\del Q)]\big] \cdot \hat{P}[Sw(t_k)] \,.$$
 
 Now we estimate these probabilities. Elementary probability calculus and the construction of the coupling yield
 \begin{eqnarray*}
  \hat{P}[Sw(t_k)]
  &=& \hat{P}\big[\{\hat{\om}: \hat{\om}_2(t_k)=\text{blue}\}\setminus\{\hat{\om}: \hat{\om}_1(t_k)=\text{blue}\}\big] \\
  &=& \hat{P}\big[\{\hat{\om}: \hat{\om}_2(t_k)=\text{blue}\}\big] - \hat{P}\big[\{\hat{\om}: \hat{\om}_1(t_k)=\text{blue}\}\big] + \\
  &&\quad + \,\hat{P}\big[\{\hat{\om}: \hat{\om}_1(t_k)=\text{blue}\}\setminus\{\hat{\om}: \hat{\om}_2(t_k)=\text{blue}\}\big]\\
  &=&  P^\la_\e[t_k\text{ blue}] - P^\mu_\e[t_k\text{ blue}] + \hat{P}[\hat{\om}: \hat{\om}_1(t_k)=\text{blue},\,\hat{\om}_2(t_k)=\text{yellow}] \\
  &=&  \big(p_\e^\text{crit}+\la_\e(t_k)\cdot\frac{\e^2}{\al_\e^4}\big) - \big(p_\e^\text{crit}+\mu_\e(t_k)\cdot\frac{\e^2}{\al_\e^4}\big) + 0\\
  &=&  \big(\la_\e(t_k)-\mu_\e(t_k)\big)\frac{\e^2}{\al_\e^4} \,\ge\, \s \cdot \frac{\e^2}{\al_\e^4} \,,
 \end{eqnarray*}
 because of $\e<\e_0$ (such that, by condition \ref{en:conv},  the probabilities are given by the used formulas) and because of the assumption in Theorem~\ref{thm:fullsing}. 
 
 Let $P^{I_k}_\e$ denote the image law of $\hat{P}$ under $f_{I_k}$. Then $P^{I_k}_\e\in\Pi_\e$. Using conditions \ref{en:sep} and \ref{en:4armnc}, we conclude
 $$ \hat{P}\big[f_{I_k}^{-1}[A_4'(t_k,\del Q)]\big] \,\ge\, c_3 P^{I_k}_\e[A_4(z_Q,\e,\delta)] \,\ge\, c_3c_1 P^0_\e[A_4(z_0,\e,\de)] \,.$$
  
 As there are $K\ge c_4(\de/\e)^2$ tiles in $\T$ (for some numerical constant $c_4>0$), the equations above imply
 $$ P^\la_\e[\hc{Q}]-P^\mu_\e[\hc{Q}] \ge c_4\frac{\de^2}{\e^2}\cdot c_3c_1 P^0_\e[A_4(z_0,\e,\de)] \cdot \s \frac{\e^2}{\al_\e^4} = \s c_1c_3c_4\cdot \frac{\de^2 P^0_\e[A_4(z_0,\e,\de)]}{P^0_\e[A_4(z_0,\e,R_0)]}\,. $$
 Using first $A_4(z_0,\e,R_0)\subseteq A_4(z_0,\e,\de)\cap A_4(z_0,\de,R_0)$ and independence of the latter two events and then condition \ref{en:4arm}, we conclude
 \begin{eqnarray*}
  P^\la_\e[\hc{Q}]-P^\mu_\e[\hc{Q}]
  &\ge& \s c_1c_3c_4\cdot \frac{\de^2}{P^0_\e[A_4(z_0,\de,R_0)]}\\
  &\ge& \s c_1c_3c_4\cdot \frac{\de^2 R_0}{\de \D_4(\de,R_0)} = \frac{\de}{\D_\s(\de)}  
 \end{eqnarray*}
 with $\D_\s(\de):=(\s c_1c_3c_4R_0)^{-1} \D_4(\de,R_0)$. Condition \ref{en:4arm} implies $\D_\s(\de)\to0$ as $\de\to0$.
 
 For $\io\in\{\mu,\la\}$, Lemma 5.1 of \cite{ss11} (implying $P^\io[\del\hc{Q}]=0$) and the weak convergence of $P^\io_{\e_n}$ yield $P^\io_{\e_n}[\hc{Q}]\to P^\io[\hc{Q}]$ as $n\to\infty$, which concludes the proof.
\end{proof}

\begin{proof}[Proof of Lemma \ref{lem:abstrsing}]
 We define for large enough $n\in\N$
 $$ Z_n := \sum_{k=1}^{K_n} \big( X^n_k-E_P[X^n_k]\big) \,.$$
 It follows that $E_P[Z_n]=0$ and that
 $$ E_{P'}[Z_n] \,=\, \sum_{k=1}^{K_n}\big(E_{P'}[X^n_k] - E_P[X^n_k]\big) \,\ge\, K_n\cdot\tfrac1n\D_n\,\ge\, an\D_n \,,$$
 because of the assumption and $K_n=\lceil an^2\rceil$. Since the random variables are uncorrelated and bounded, we can estimate the variance of $Z_n$ under $P$ or under $P'$ as follows:
 $$ \Var[Z_n]=\sum_{k=1}^{K_n}\Var\big[X^n_k-E_P[X^n_k]\big] = \sum_{k=1}^{K_n}\Var[X^n_k]\le K_nb^2\le (a+1)b^2n^2 \,.$$
 Using Chebyshev's Inequality, we estimate
 $$ P[Z_n\ge\tfrac a2n\D_n] \,\le\, \frac{4}{a^2n^2\D_n^2} \Var_P[Z_n]\,\le\, \frac{4(a+1)b^2n^2}{a^2n^2\D_n^2}\,=\,\frac{4(a+1)b^2}{a^2}\cdot \D_n^{-2} $$
 and
 \begin{eqnarray*}
  P'[Z_n<\tfrac a2n\D_n]
  &=& P'\big[(E_{P'}[Z_n]-Z_n)>(E_{P'}[Z_n]-\tfrac a2n\D_n)\big] \\
  &\le& P'\big[|E_{P'}[Z_n]-Z_n|>(an\D_n-\tfrac a2n\D_n)\big] \\
  &\le& \frac{4}{a^2n^2\D_n^2} \Var_{P'}[Z_n] \,\le\, \frac{4(a+1)b^2}{a^2}\cdot \D_n^{-2}\,.
 \end{eqnarray*}
If we now choose a sparse enough sub-sequence $n_l$, $l\in\N$, i.e.\ such that $\sum_l\D_{n_l}^{-2}<\infty$, the Borel-Cantelli Lemma yields
$$ \begin{array}{rrcl}
 & P'\big[Z_{n_l}<\tfrac a2n_l\D_{n_l}\text{ for infinitely many }l] &=& 0 \\[1ex]
 \text{implying }& P'\big[Z_{n_l}\ge\tfrac a2n_l\D_{n_l}\text{ for infinitely many }l] &=& 1\,,\\[1ex]
 \text{while }& P\big[Z_{n_l}\ge\tfrac a2n_l\D_{n_l}\text{ for infinitely many }l] &=& 0\,.
\end{array} $$
Therefore we detected an event which has $P$-probability zero, but $P'$-probability one.
\end{proof}

\begin{proof}[Proof of Theorem~\ref{thm:fullsing}.]
 We want to apply Lemma~\ref{lem:abstrsing}. Let $P'=P^\la$ and $P=P^\mu$. We set $\de_n=1/n$, $n\in\N$, and choose an appropriate $a>0$ (depending on the size of $D$) such that, for sufficiently large $n$, we can place $K_n=\lceil an^2\rceil$ disjoint squares $Q^n_1,\ldots,Q^n_{K_n}$ of size $\de_n$ in $D$.  We define the random variables $ X^n_k:\HH\to\R$ by
 $$ X^n_k = \1_{\hc{Q^n_k}}\,,\qquad k\in\{1,\ldots,K_n\}\,. $$
 Since the disjointness of the squares yields independence of the crossing events for all $P^\io_{\e_m}$, since $P^\io_{\e_m}\to P^\io$ weakly and since $P^\io[\del\hc{Q^n_k}]=0$ by \cite[Lemma~5.1]{ss11}, the random variables $X^n_k$, $k\in\{1,\ldots,K_n\}$, are independent for $P^\io$, $\io\in\{\mu,\la\}$. Moreover, $|X^n_k|\le1$, and Lemma \ref{lem:estprob} yields
 $$ E_{P^\la}[X^n_k] - E_{P^\mu}[X^n_k] = P^\la[\hc{Q^n_k}]-P^\mu[\hc{Q^n_k}] \ge \frac{\de_n}{\D_\s(\de_n)}=\tfrac1n\D_n $$
 with $\D_n:=\D_\s(\de_n)^{-1}\to\infty$ as $n\to\infty$. Thus Lemma~\ref{lem:abstrsing} yields that $P^\mu$ and $P^\la$ are singular with respect to each other. Since all random variables $X^n_k$ are $\B_D$-measurable, we can also apply Lemma~\ref{lem:abstrsing} when $P^\mu$ and $P^\la$ are restricted to $\B_D$.
\end{proof}

\begin{proof}[Proof of Corollary \ref{cor:restrict}]
 The proof is analogous to the proof of \cite[Corollary 2]{a12}. Let $m_0\in\N$ such that $B_{\frac{1}{m_0}}(z)\subseteq D$. Let $n\ge m_0$. By Theorem~\ref{thm:fullsing} -- applied inside $B_{\frac1n}(z)$ -- there are sets $B_n\in\B_{B_{\frac1n}(z)}$ with $P^\mu[B_n]=0$ and $P^\la[B_n]=1$. We set
 $$ B_* := \bigcup_{m\ge m_0} \bigcap_{n\ge m} B_n \,.$$
 Then $B_*\in\B_z$. Since countable unions or intersection of sets of probability zero respectively one have probability zero respectively one, it follows that $P^\mu[B_*]=0$ and $P^\la[B_*]=1$, which proves the corollary.
\end{proof}

\begin{proof}[Proof of Lemma \ref{lem:cond}]
 As it is proven on the triangular lattice that the 4-arm-exponent is $5/4$, see \cite[Theorem 4]{sw1}, condition \ref{en:4arm} holds. For bond percolation on the square lattice, this condition is proven by Christophe Garban in \cite[Lemma B.1]{ss11}.
 
 Now we claim that $R_0$ is below a characteristic length of $p^{-N_0}_\e=p_\e^\text{crit}-N_0\e^2/\al_\e^4$, i.e.\ there is some $\ep\in(0,\2)$ such that $R_0/\e \le L_\ep(p^{-N_0}_\e)$ for all $\e>0$. Thereto we provisionally fix some $\ep_0\in(0,\2)$. Since
 $$ \big|p^{-N_0}_\e - p_\e^\text{crit}\big|\, (R_0/\e)^2 P_\e^0[A_4(z_0,\e,R_0)] \,=\, R_0^2N_0 \,,$$
 Lemma~\ref{lem:characle} (for $n=(R_0/\e)$ and $p=p^{-N_0}_\e$) yields that $R_0/\e \le C_4 L_{\ep_0}(p^{-N_0}_\e)$ for some $C_4=C_4(R_0,N_0,\ep_0)>0$. By Lemma~\ref{lem:incrL}, we find an $\ep\in(0,\ep_0)$ such that the claim holds.
 
 Now we fix this $\ep>0$. Since every $P_\e\in\Pi_\e$ is between $P^{-N_0}_\e$ and $P^{+N_0}_\e$, the claim above allows us to use arguments of RSW style and to apply most of the results of \cite{n7} and \cite{k87} as long as we use radii $R\le R_0$. In fact, all of the remaining conditions easily follow from the results of these papers.
   
 The following reasoning is a standard technique. By RSW, there is a constant $c>0$ such that for all $col\in\{\text{blue},\text{yellow}\}$, $z\in\C$, $\e\le r \le R_0/2$ and $P_\e\in\Pi_\e$
 $$ c \le P_\e[A_1^{col}(z,r,2r)] \le 1-c \,.$$
 Let $R\le R_0$ be fixed. For $r\in(\e,R/2)$, let $K_r\in\N$ be the largest number such that $2^{K_r}\le R/r$. Then $K_r\to\infty$ as $r\to0$. It follows that
 \begin{eqnarray*}
  P_\e[A_1^{col}(z,r,R)] &\le& P_\e[\forall k=1,\ldots,K_r:\, A_1^{col}(z,r2^{k-1},r2^k)] \\
  &\le&  \prod_{k=1}^{K_r} P_\e[A_1^{col}(z,r2^{k-1},r2^k)] \le (1-c)^{K_r} \to 0
 \end{eqnarray*}
 as $r\to0$, which shows condition \ref{en:1arm} (on both lattices).
 
 By \cite[Corollary A.8]{ss10} (stating that the 5-arm-exponent is 2) and Reimer's Inequality, it follows that (for some $\ti{c}>0$)
 \begin{equation}
 \ti{c}R_0^{-2}\e^2 \le P_\e^0[A_5(z_0,\e,R_0)] \le P_\e^0[A_4(z_0,\e,R_0)] \cdot P_\e^0[A_1(z_0,\e,R_0)] \,.
 \tag{1}
 \end{equation}
 Thus condition \ref{en:1arm} yields $\e^2/\al_\e^4 \to0 $ as $\e\to0$, which, together with $p_\e^\text{crit}=\2$, implies condition \ref{en:conv} (on both lattices). 
 
 Since the considered lattices are transitive, the estimates in conditions \ref{en:4armnc} and \ref{en:sep} hold uniformly in $z\in\C$, if they hold for $z=0$. Thus we consider only this case. 
 Condition \ref{en:4armnc} on the triangular lattice is included in Theorem 26 of \cite{n7}. On the square lattice, condition \ref{en:4armnc} is a consequence of \cite[Lemma 8]{k87} (with $v=0$) and \cite[Lemma 4]{k87}.  
 These two lemmas (with $\kappa=0.5$) also imply condition \ref{en:sep} on the square lattice. On the triangular lattice, it is a special case of equation (4.20) in \cite{n7}. 
\end{proof}

Note that we are considering site percolation on the triangular lattice or bond percolation on the square lattice with mesh size 1 in the remaining two lemmas.

\begin{proof}[Proof of Lemma~\ref{lem:characle}]
 We fix some $\ep\in(0,\2)$ and abbreviate $L(p):=L_\ep(p)$. We will use the following facts. First,  
 \begin{equation}
  \exists\,\tid{C_1}(\ep),\tid{C_2}(\ep)>0  \,\forall\, p\in(0,1) :\; \tid{C_1} \,\le\, |p-p_c| L(p)^2\, \al_4\big(L(p)\big) \,\le\, \tid{C_2} \,,\tag{i}
 \end{equation}
 which is \cite[Proposition 32]{n7} for the triangular lattice and \cite[Theorem 4]{k87} for the square lattice. Second, we need quasi-multiplicativity  \cite[Proposition 4]{ss10}:
 \begin{equation}
  \exists\,C_5>0 \,\forall\, m<\ti{n} :\; \al_4(m)\cdot\al_4(m,\ti{n}) \,\le\, C_5\,\al_4(\ti{n})\,. \tag{ii}
 \end{equation}
 Finally, we need an estimate of the four arm event, namely
 \begin{equation}
  \exists\,\beta,C_6>0 \,\forall\, m<\ti{n} :\; \al_4(m,\ti{n}) \,\ge\, C_6\Big(\frac{m}{\ti{n}}\Big)^{2-\beta}\,. \tag{iii}
 \end{equation}
 Its proof is analogous to the proof of equation (1) above. Note that we can a-priori apply the RSW theory for (iii), since there we consider only critical percolation. 
 
 Let $C_1,C_2>0$. We define $C_3,C_4>0$ by
 $$ C_4 \,:=\, \max\big\{\big(\tfrac{C_2C_5}{\tid{C_1}C_6}\big)^{\frac1\beta},1\big\} \quad\text{and}\quad
    \frac{1}{C_3} \,:=\, \max\big\{\big(\tfrac{\tid{C_2}C_5}{C_1C_6}\big)^{\frac1\beta},1\big\}\,. $$
 Let $p\in(0,1)$ and $n\ge1$ with
 \begin{equation}
  C_1 \,\le\, |p-p_c|\,n^2 \al_4(n) \,\le\,C_2 \,. \tag{$*$}
 \end{equation}

 First, we show that $n/L(p)\le C_4$. We can assume $n>L(p)$, since otherwise $n/L(p)\le 1 \le C_4$. Facts (ii) and (iii) with $m=L(p)$ and $\ti{n}=n$ imply
 $$ \frac{\al_4(n)}{\al_4(L(p))} \ge \tfrac{1}{C_5}\, \al_4\big(L(p),n\big) \ge \tfrac{C_6}{C_5} \Big(\frac{L(p)}{n}\Big)^{2-\beta} \,.$$
 Combined with the left inequality of (i) and the right inequality of ($*$), we conclude
 $$ \frac{C_2}{\tid{C_1}} \,\ge\, \frac{|p-p_c|\,n^2 \al_4(n)}{|p-p_c|\,L(p)^2\al_4(L(p))} \,\ge\, \Big(\frac{n}{L(p)}\Big)^2 \tfrac{C_6}{C_5} \Big(\frac{L(p)}{n}\Big)^{2-\beta} \,=\, \tfrac{C_6}{C_5} \Big(\frac{n}{L(p)}\Big)^\beta $$
 and therefore $n/L(p)\le C_4$.
 
 An analogous reasoning with interchanged roles of $L(p)$ and $n$ yields the other estimate, i.e.\ $L(p)/n\le1/C_3$. Thereto we may assume $L(p)>n$, since otherwise $L(p)/n\le1\le1/C_3$. Using facts (ii) and (iii) with $m=n$ and $\ti{n}=L(p)$, we get
 $$ \frac{\al_4(L(p))}{\al_4(n)} \ge \tfrac{1}{C_5}\, \al_4\big(n,L(p)\big) \ge \tfrac{C_6}{C_5} \Big(\frac{n}{L(p)}\Big)^{2-\beta} \,.$$
 Now we apply the right inequality of (i) and the left inequality of ($*$) to conclude
 $$ \frac{\tid{C_2}}{C_1} \,\ge\, \frac{|p-p_c|\,L(p)^2 \al_4(L(p))}{|p-p_c|\,n^2\al_4(n)} \,\ge\, \Big(\frac{L(p)}{n}\Big)^2 \tfrac{C_6}{C_5} \Big(\frac{n}{L(p)}\Big)^{2-\beta} \,=\, \tfrac{C_6}{C_5} \Big(\frac{L(p)}{n}\Big)^\beta $$
 and therefore $L(p)/n\le1/C_3$.
\end{proof}

\begin{proof}[Proof of Lemma~\ref{lem:incrL}]
 Let $\ep_0\in(0,\2)$ and $K\ge2$. The RSW Theorem (see \cite[Theorem 2]{n7}, for instance) states that there is a universal positive function $f_K(\cdot)$, such that, for all $m\in\N$, if the probability of crossing an $m\times m$ rectangle is at least $\delta$, then the probability of crossing an $Km\times m$ rectangle is at least $f_K(\delta)$. We set $\ep:=f_K(\ep_0)/2$. Then $\ep\in(0,\ep_0)$ as $f_k(\delta)\le\delta$. Let $p\in(0,p_c)$. We abbreviate $L:=L_{\ep_0}(p)$. We have to show that $L_\ep(p)\ge KL$. By the definition of $L_\ep(p)$, it suffices to show that $P_p[\hc{(n\times n)}]>\ep$ for all $n< KL$. If $n\le L$, then $P_p[\hc{(n\times n)}]\ge\ep_0>\ep$ by the definition of $L=L_{\ep_0}(p)$. Now let $n\in(L,KL)$. Since every crossing of a $KL\times L$ rectangle induces a crossing of an $n\times n$ rectangle (if the rectangles are matched on the upper left corner), it follows that
 $ P_p[\hc{(n\times n)}] \ge P_p[\hc{(KL\times L)}] \ge f_K(\ep_0) > \ep$,
 which completes the proof.
\end{proof}

\emph{Acknowledgement:}
This research was supported by a scholarship of the Cusanuswerk, one of the German national academic foundations.

\end{document}